\theoremstyle{plain}
\newtheorem{theorem}{\bf Theorem}
\newtheorem{claim}[theorem]{\bf Claim}
\newtheorem{proposition}[theorem]{\bf Proposition}
\newtheorem{corollary}[theorem]{\bf Corollary}
\newtheorem{lemma}[theorem]{\bf Lemma}
\theoremstyle{definition}
\newtheorem{definition}[theorem]{\bf Definition}
\numberwithin{theorem}{section}
\numberwithin{equation}{section}
\DeclareMathOperator{\lk}{lk}
\DeclareMathOperator{\mes}{mes}
\newcommand{\cupdot}{\mathbin{\mathaccent\cdot\cup}}
\DeclareMathOperator{\intcomplex}{Int}
\DeclareMathOperator{\hcov}{Cov}
\begin{document}
	
		\title{Collapsibility of simplicial complexes of hypergraphs}
		\author{Alan Lew\footnote{Department of Mathematics, Technion, Haifa 32000, Israel.		
				e-mail: alan@campus.technion.ac.il .  Supported by ISF grant no. 326/16.}}
		
		\date{}

\maketitle

\begin{abstract}

Let $\mathcal{H}$ be an $r$-uniform hypergraph.
We show that the simplicial complex whose simplices are the hypergraphs $\mathcal{F}\subset\mathcal{H}$ with covering number at most $p$ is $\left(\binom{r+p}{r}-1\right)$-collapsible. Similarly, the simplicial complex whose simplices are the pairwise intersecting hypergraphs $\mathcal{F}\subset\mathcal{H}$ is $\frac{1}{2}\binom{2r}{r}$-collapsible.

\end{abstract}

\section{Introduction}

Let $X$ be a finite simplicial complex. Let $\eta$ be a simplex of $X$ such that  $|\eta|\leq d$ and $\eta$ is contained in a unique maximal face $\tau\in X$. We say that the complex
\[
    X'= X\setminus \{ \sigma\in X:\, \eta\subset\sigma\subset\tau\}
\]
is obtained from $X$ by an \emph{elementary $d$-collapse}, and we write
$
    X\xrightarrow{\eta} X'.
$

The complex $X$ is called \emph{$d$-collapsible} if there exists a sequence of elementary $d$-collapses 
\[
    X=X_1\xrightarrow{\eta_1} X_2 \xrightarrow{\eta_2} \cdots \xrightarrow{\eta_{k-1}} X_k=\emptyset
\]
from $X$ to the void complex $\emptyset$. The \emph{collapsibility} of $X$ is the minimal $d$ such that $X$ is $d$-collapsible.

A simple consequence of $d$-collapsibility is the following:

\begin{proposition}[Wegner {\cite[Lemma 1]{wegner1975d}}]
	\label{claim:hom}
	If $X$ is $d$-collapsible then it is homotopy equivalent to a simplicial complex of dimension smaller than $d$. 
\end{proposition}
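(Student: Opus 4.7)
The plan is to process the collapse sequence
\[
X = X_1 \xrightarrow{\eta_1} X_2 \xrightarrow{\eta_2} \cdots \xrightarrow{\eta_{k-1}} X_k = \emptyset
\]
in reverse, inductively constructing CW complexes $Y_k, Y_{k-1}, \ldots, Y_1$ with $Y_i \simeq X_i$ and $\dim Y_i \le d-1$. The base case is $Y_k = \emptyset$. For the inductive step, write $X_i = X_{i+1} \cup [\eta_i, \tau_i]$, where $\tau_i$ is the unique maximal face of $X_i$ containing $\eta_i$ and $|\eta_i| \le d$, and split into two cases depending on whether $\eta_i = \tau_i$ or $\eta_i \subsetneq \tau_i$.

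If $\eta_i = \tau_i$, then $[\eta_i, \tau_i] = \{\eta_i\}$, so $X_i$ is obtained from $X_{i+1}$ by adjoining the single simplex $\eta_i$ along its boundary $\partial \eta_i \subseteq X_{i+1}$. This is an attachment of a $(|\eta_i|-1)$-cell of dimension at most $d-1$, and I form $Y_i$ by the analogous cell attachment to $Y_{i+1}$, using the homotopy equivalence $X_{i+1} \simeq Y_{i+1}$ to pull back the attaching map. The dimension bound is preserved.

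If $\eta_i \subsetneq \tau_i$, the claim is that $X_{i+1} \hookrightarrow X_i$ is already a homotopy equivalence, so I may take $Y_i = Y_{i+1}$. Topologically, $X_i = X_{i+1} \cup \tau_i$ and $X_{i+1} \cap \tau_i = A_i$, where $A_i$ is the antistar of $\eta_i$ in $\tau_i$, i.e.\ the subcomplex of $\tau_i$ consisting of those faces that do not contain $\eta_i$ (all of which lie in $X_{i+1}$, since they are not in the deleted interval). Choosing any vertex $v \in \tau_i \setminus \eta_i$, for every $\sigma \in A_i$ the simplex $\sigma \cup \{v\}$ still lies in $\tau_i$ and still fails to contain $\eta_i$ (because $v \notin \eta_i$), so $\sigma \cup \{v\} \in A_i$; thus $A_i$ is a simplicial cone with apex $v$ and is contractible. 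Since $\tau_i$ is also contractible and $A_i \hookrightarrow \tau_i$ is a cofibration, the pushout $X_{i+1} \cup_{A_i} \tau_i$ is homotopy equivalent to $X_{i+1}$.

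Iterating yields $Y_1 \simeq X$ of dimension at most $d-1$; a standard simplicial-approximation argument on the attaching maps replaces $Y_1$ by a homotopy-equivalent simplicial complex of the same dimension. The main obstacle is the $\eta_i \subsetneq \tau_i$ case: one must identify the antistar as a cone and verify that gluing a contractible simplex onto $X_{i+1}$ along a contractible subspace does not alter the homotopy type, after which the induction runs smoothly.
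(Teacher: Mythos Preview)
The paper does not prove this proposition; it is stated with a citation to Wegner and used as a black box. So there is no ``paper's own proof'' to compare against.

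Your argument is correct and is essentially the standard one. The key dichotomy is exactly as you identify: an elementary $d$-collapse with $\eta_i \subsetneq \tau_i$ is a strong deformation retraction (your antistar/cone argument is the usual way to see this), while a collapse with $\eta_i = \tau_i$ removes a single maximal simplex of dimension $|\eta_i|-1 \le d-1$, which read in reverse is a cell attachment of that dimension. Running the induction backwards and transporting the attaching maps across the homotopy equivalence $X_{i+1}\simeq Y_{i+1}$ (legitimate by the gluing lemma for cofibrations) yields a CW complex of dimension at most $d-1$ homotopy equivalent to $X$; the final appeal to simplicial approximation to replace this CW complex by a simplicial complex of the same dimension is standard. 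One small remark: in the $\eta_i=\tau_i$ case you implicitly use that $\partial\eta_i\subset X_{i+1}$, which holds because proper faces of $\eta_i$ lie outside the removed interval $\{\eta_i\}$; this is fine but worth stating.
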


Let $\mathcal{H}$ be a finite hypergraph. We identify $\mathcal{H}$ with its edge set. The \emph{rank} of $\mathcal{H}$ is the maximal size of an edge of $\mathcal{H}$.

A set $C$ is a \emph{cover} of $\mathcal{H}$ if $A\cap C\neq \emptyset$ for all $A\in\mathcal{H}$. The \emph{covering number} of $\mathcal{H}$, denoted by $\tau(\mathcal{H})$, is the minimal size of a cover of $\mathcal{H}$.

For $p\in \mathbb{N}$, let
\[
	\hcov_{\mathcal{H},p}=\{ \mathcal{F}\subset\mathcal{H} : \, \tau(\mathcal{F})\leq p\}.
\]
That is, $\hcov_{\mathcal{H},p}$ is a simplicial complex whose vertices are the edges of $\mathcal{H}$ and whose simplices are the hypergraphs $\mathcal{F}\subset\mathcal{H}$ that can be covered by a set of size at most $p$.
Some topological properties of the complex $\hcov_{\binom{[n]}{r},p}$ were studied by Jonsson in \cite{jonsson2008simplicial}.

The hypergraph $\mathcal{H}$ is called \emph{pairwise intersecting} if $A\cap B\neq \emptyset$ for all $A,B\in \mathcal{H}$.
Let
\[
\intcomplex_{\mathcal{H}}=\{\mathcal{F}\subset\mathcal{H} : \, A\cap B\neq \emptyset \text{ for all } A,B\in\mathcal{F}\}.
\]
So, $\intcomplex_{\mathcal{H}}$ is a simplicial complex whose vertices are the edges of $\mathcal{H}$ and whose simplices are the hypergraphs $\mathcal{F}\subset\mathcal{H}$ that are pairwise intersecting.

Our main results are the following:

\begin{theorem}
	\label{thm:cov}
	Let $\mathcal{H}$ be a hypergraph of rank $r$. Then $\hcov_{\mathcal{H},p}$ is $\left(\binom{r+p}{r}-1\right)$-collapsible.
\end{theorem}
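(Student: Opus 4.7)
My plan is a double induction, outer on $p$ and inner on $|\mathcal{H}|$. Set $d := \binom{r+p}{r} - 1$; the cases $p = 0$ or $|\mathcal{H}| = 0$ are immediate. For the inner step, fix $A_0 \in \mathcal{H}$. By the inner IH, $\hcov_{\mathcal{H}\setminus\{A_0\},p}$ is $d$-collapsible, so by the standard lemma (a $(d-1)$-collapsing sequence of $\lk(A_0, X)$ lifts, by adjoining $A_0$ to each $\eta$, to a $d$-collapsing of $X$ down to $X \setminus A_0$), it suffices to show that the link $L := \lk(A_0, \hcov_{\mathcal{H}, p})$ is $(d-1)$-collapsible.

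A simplex $\mathcal{F}$ lies in $L$ iff some size-$\leq p$ cover of $\mathcal{F}$ meets $A_0$, giving $L = \bigcup_{v \in A_0} L_v$, where $L_v$ consists of subhypergraphs admitting such a cover through $v$. Each $L_v$ is the simplicial join of the full simplex on $\{A \in \mathcal{H}\setminus\{A_0\} : v \in A\}$ with $\hcov_{\mathcal{H}_v, p-1}$, where $\mathcal{H}_v := \{A \in \mathcal{H}\setminus\{A_0\} : v \notin A\}$ is of rank $\leq r$. By the outer IH on $p$, $\hcov_{\mathcal{H}_v, p-1}$ is $\bigl(\binom{r+p-1}{r} - 1\bigr)$-collapsible, and this bound is preserved under joining with a simplex.

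To bound $L = \bigcup_v L_v$, I would introduce an auxiliary statement parametrised by the pivot size: for any rank-$\leq r$ hypergraph $\mathcal{K}$ and set $B$ with $|B| = s \leq r$, the complex $\{\mathcal{F} \subset \mathcal{K} : \tau(\mathcal{F} \cup \{B\}) \leq p\}$ is $\bigl(\binom{r+p}{r} - \binom{r+p-s}{r-s} - 1\bigr)$-collapsible. Taking $s = r$ yields exactly the required $(d-1)$-collapsibility of $L$. The base $s = 1$ is the join description above, and the inductive step (on $s$) uses the decomposition $L = L_{v_0} \cup M$ for $v_0 \in B$, where $M$ is the analogous complex for the smaller pivot $B \setminus \{v_0\}$. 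Pascal's identity $\binom{r+p}{r} = \binom{r+p-1}{r} + \binom{r+p-1}{r-1}$ makes the numerology close up.

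The main obstacle will be the gluing step: establishing a union lemma for collapsibility of $X = X_1 \cup X_2$, in terms of the collapsibilities of $X_1, X_2$, and $X_1 \cap X_2$, sharp enough to combine the bounds on $L_{v_0}$ and $M$. The likely route is to show that the intersection $L_{v_0} \cap M$ is itself a covering-type complex of strictly smaller parameters (both a cover slot and the pivot having been reduced) and hence inductively controllable; then collapse $M$ onto $L_{v_0} \cap M$ using only simplices outside $L_{v_0}$ (each still equipped with a unique facet in $L$), and finally collapse the remainder of $L_{v_0}$. Aligning all of the simplex-size bookkeeping with the Pascal recursion is the technical heart of the argument.
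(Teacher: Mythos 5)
Your plan is a genuinely different route from the paper's, and it breaks down at exactly the step you flag as the obstacle. There is no union lemma for $d$-collapsibility analogous to the Mayer--Vietoris bound for Leray numbers: collapsibility is notoriously ill-behaved under unions (the Matou{\v s}ek--Tancer paper cited here exists precisely to exhibit the gap between the two notions). Your fallback --- collapse $M$ onto $L_{v_0}\cap M$ using only faces outside $L_{v_0}$, then finish inside $L_{v_0}$ --- requires a \emph{relative} collapsibility statement that does not follow from knowing that $M$ and $L_{v_0}\cap M$ are each collapsible with given parameters; a $d$-collapsing sequence of $M$ need not restrict to one of the pair $(M,\,M\cap L_{v_0})$. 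Compounding this, your description of $L_{v_0}\cap M$ as ``a covering-type complex of strictly smaller parameters'' is not correct as stated: $\mathcal{F}\in L_{v_0}\cap M$ means $\mathcal{F}$ admits one cover of size $\leq p$ through $v_0$ and a possibly \emph{different} cover witnessing membership in $M$; this is weaker than admitting a single cover with both properties, so the intersection is not an instance of your auxiliary family of complexes. Everything before that point (the deletion/link induction via Tancer's lemma, the identification of each $L_v$ as a join of a simplex with $\hcov_{\mathcal{H}_v,p-1}$, the Pascal numerology) is sound, but the argument as proposed does not close, and I do not see how to close it without a substantially new idea.

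For comparison, the paper performs no decomposition of $\hcov_{\mathcal{H},p}$ at all. It first proves a general criterion (Theorem \ref{thm:mes_simpler}): if every vertex set $\{v_1,\ldots,v_k\}$ admitting maximal faces $\sigma_1,\ldots,\sigma_{k+1}$ with $v_i\notin\sigma_i$ and $v_i\in\sigma_j$ for $i<j$ has size at most $d'$, then $X$ is $d'$-collapsible. A maximal face $\mathcal{F}_i$ of $\hcov_{\mathcal{H},p}$ is exactly the set of edges meeting some cover $C_i$ with $|C_i|\leq p$, so such a configuration yields $A_i\cap C_i=\emptyset$ and $A_i\cap C_j\neq\emptyset$ for $i<j$; the Frankl--Kalai lemma (Lemma \ref{lem:frankl}), applied to the families $\{A_1,\ldots,A_k,\emptyset\}$ and $\{C_1,\ldots,C_{k+1}\}$, gives $k+1\leq\binom{r+p}{r}$, and the theorem follows with no induction and no gluing.
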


\begin{theorem}
	\label{thm:int}
	Let $\mathcal{H}$ be a hypergraph of rank $r$. Then  $\intcomplex_{\mathcal{H}}$ is $\frac{1}{2}\binom{2r}{r}$-collapsible.
\end{theorem}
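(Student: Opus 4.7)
A first observation is that $\intcomplex_{\mathcal{H}}$ is a subcomplex of $\hcov_{\mathcal{H},r}$: if $\mathcal{F}$ is pairwise intersecting and $A\in\mathcal{F}$, then every other edge of $\mathcal{F}$ meets $A$, so the $r$-element set $A$ itself covers $\mathcal{F}$, giving $\tau(\mathcal{F})\le r$. Consequently Theorem \ref{thm:cov} with $p=r$ already yields that $\intcomplex_{\mathcal{H}}$ is $\bigl(\binom{2r}{r}-1\bigr)$-collapsible, which falls short of the desired bound by roughly a factor of two. The plan is therefore to rerun the proof of Theorem \ref{thm:cov}, but to replace the generic cover $C$ of size $r$ by an honest edge $A\in\mathcal{F}$, and exploit the additional structural constraint this buys.

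The quantitative gain is the following. For any $B\in\mathcal{F}$, the pairwise intersection condition forces $|A\cup B|\le 2r-1$, whereas a generic cover $C$ of size $r$ could only guarantee $|A\cup C|\le 2r$. Counting $r$-subsets of the ambient universe then gives
\[
\binom{2r-1}{r}=\tfrac{1}{2}\binom{2r}{r},
\]
precisely the factor-of-two saving we need.

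Concretely, fix a linear order on the edges of $\mathcal{H}$ and, for each maximal simplex $\mathcal{F}\in\intcomplex_{\mathcal{H}}$, let $A_{\mathcal{F}}$ denote its smallest edge. Define a pivot $\eta(\mathcal{F})\subseteq\mathcal{F}$ by the analogue of whatever rule the proof of Theorem \ref{thm:cov} uses, with $A_{\mathcal{F}}$ playing the role of the cover. The size bound $|\eta(\mathcal{F})|\le\tfrac{1}{2}\binom{2r}{r}$ then drops out of the same $r$-subset count, now performed in a universe of size $2r-1$ rather than $r+p=2r$.

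The main obstacle is to verify the uniqueness condition required for each elementary collapse: at the moment when $\eta(\mathcal{F})$ is removed, the unique maximal face of the remaining complex containing $\eta(\mathcal{F})$ must be $\mathcal{F}$ itself. The natural scheme is to process maximal faces in lexicographic order determined by $A_{\mathcal{F}}$, with ties broken by $\mathcal{F}$ itself, and to check that any rival maximal face containing $\eta(\mathcal{F})$ necessarily comes strictly earlier in this order and has therefore already been collapsed. This sequencing argument should mirror the corresponding step in the proof of Theorem \ref{thm:cov}, with a few extra consistency checks stemming from the substitution of an arbitrary cover by an actual edge of $\mathcal{F}$.
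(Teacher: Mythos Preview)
Your plan has a genuine gap at its central step. In the proof of Theorem~\ref{thm:cov}, the cover $C_i$ of a maximal face $\mathcal{F}_i$ is used through the equivalence
\[
A\in\mathcal{F}_i \iff A\cap C_i\neq\emptyset,
\]
so that $A_i\notin\mathcal{F}_i$ forces $A_i\cap C_i=\emptyset$, which is exactly the diagonal disjointness needed for Lemma~\ref{lem:frankl}. If you replace $C_i$ by the smallest edge $A_{\mathcal{F}_i}\in\mathcal{F}_i$, this equivalence fails: $A_i\notin\mathcal{F}_i$ only tells you that $A_i$ misses \emph{some} edge of $\mathcal{F}_i$, not that $A_i\cap A_{\mathcal{F}_i}=\emptyset$. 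So the Frankl--Kalai hypotheses are not met, and the argument does not get off the ground. Relatedly, the bound $\binom{r+p}{r}$ in Lemma~\ref{lem:frankl} is not obtained by counting $r$-subsets of a fixed ambient set of size $r+p$; the families may live in an arbitrarily large universe. Thus your ``universe of size $2r-1$'' heuristic, while giving the right target number $\binom{2r-1}{r}=\tfrac12\binom{2r}{r}$, does not correspond to any step you could actually carry out.

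The paper's route to the factor-of-two saving is different and uses information your plan discards. One records, via Theorem~\ref{thm:mes_simpler} (or equivalently Proposition~\ref{prop:clique}), edges $A_1,\dots,A_k$ and witnesses $B_1,\dots,B_k\in\mathcal{H}$ with $A_i\cap B_i=\emptyset$ and $A_i\cap B_j\neq\emptyset$ for $i<j$; crucially one also has $A_i\cap A_j\neq\emptyset$ for all $i\neq j$, because the $A_i$ lie in a common face of $\intcomplex_{\mathcal{H}}$. This last symmetric condition lets one concatenate the two lists in opposite orders,
\[
(A_1,\dots,A_k,B_k,\dots,B_1)\quad\text{and}\quad(B_1,\dots,B_k,A_k,\dots,A_1),
\]
to obtain a single Frankl--Kalai system of length $2k$ with both sides of rank $r$, whence $2k\le\binom{2r}{r}$. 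The saving comes from this doubling trick exploiting the pairwise intersection of the $A_i$, not from any property of a single designated edge $A_{\mathcal{F}}$.
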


The following examples show that these bounds are sharp:
\begin{itemize}
\item Let $\mathcal{H}=\binom{[r+p]}{r}$ be the complete $r$-uniform hypergraph on $r+p$ vertices. The covering number of $\mathcal{H}$ is $p+1$, but for any $A\in\mathcal{H}$ the hypergraph $\mathcal{H}\setminus\{A\}$ can be covered by a set of size $p$, namely by $[r+p]\setminus A$. Therefore the complex $\hcov_{\binom{[r+p]}{r},p}$ is the boundary of the $\left(\binom{r+p}{r}-1\right)$-dimensional simplex, so it is homeomorphic to a $\left(\binom{r+p}{r}-2\right)$-dimensional sphere. 
 Hence, by Proposition \ref{claim:hom}, $\hcov_{\binom{[r+p]}{r},p}$ is not $\left(\binom{r+p}{r}-2\right)$-collapsible.
 
\item Let $\mathcal{H}=\binom{[2r]}{r}$ be the complete $r$-uniform hypergraph on $2r$ vertices. Any $A\in \mathcal{H}$ intersects all the edges of $\mathcal{H}$ except the edge $[2r]\setminus A$. Therefore the complex $\intcomplex_{\binom{[2r]}{r}}$ is the boundary of the $\frac{1}{2}\binom{2r}{r}$-dimensional cross-polytope, so it is homeomorphic to a $\left( \frac{1}{2}\binom{2r}{r}-1\right)$-dimensional sphere. Hence, by Proposition \ref{claim:hom}, $\intcomplex_{\binom{[2r]}{r}}$ is not $\left(\frac{1}{2}\binom{2r}{r}-1\right)$-collapsible.
\end{itemize}

A related problem was studied by Aharoni, Holzman and Jiang in \cite{aharoni2018fractional}, where they show that for any $r$-uniform hypergraph $\mathcal{H}$ and $p\in\mathbb{Q}$, the complex of hypergraphs $\mathcal{F}\subset\mathcal{H}$ with fractional matching number (or equivalently, fractional covering number) smaller than $p$ is $(\lceil rp \rceil -1)$-collapsible.

Our proofs rely on two main ingredients. The first one is the following theorem:

\begin{theorem}\label{thm:mes_simpler}
Let $X$ be a simplicial complex on vertex set $V$. Let $S(X)$ be the collection of all sets $\{v_1,\ldots,v_k\}\subset V$ satisfying the following condition:
\begin{itemize}
    \item[]
There exist maximal faces $\sigma_1,\sigma_2,\ldots,\sigma_{k+1}$ of $X$ such that:
    \begin{itemize}
        \item[$\bullet$] $v_i \notin \sigma_i$ for all $i\in[k]$,
        \item[$\bullet$] $v_i \in \sigma_j$ for all $1\leq i<j\leq k+1$.
    \end{itemize}
\end{itemize}

Let $d'(X)$ be the maximum size of a set in $S(X)$. Then $X$ is $d'(X)$-collapsible.
\end{theorem}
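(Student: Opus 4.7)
My plan is to prove Theorem \ref{thm:mes_simpler} by induction on the number of simplices $|X|$. The base case $X=\emptyset$ is trivial. For the inductive step, pick any maximal face $\sigma$ of $X$ and consider the hypergraph
$H_\sigma=\{\sigma\setminus\tau:\tau \text{ maximal in } X,\ \tau\neq\sigma\}$.
Each element of $H_\sigma$ is nonempty (distinct maximal faces cannot contain each other), so a transversal exists. Let $\eta\subseteq\sigma$ be any minimal-by-inclusion transversal of $H_\sigma$.

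The first key step is to bound $|\eta|\leq d'(X)$. Writing $\eta=\{v_1,\ldots,v_k\}$ in any order, the minimality of $\eta$ produces, for each $i$, a maximal face $\sigma_i\neq\sigma$ with $\eta\setminus\{v_i\}\subseteq\sigma_i$ and $v_i\notin\sigma_i$. Setting $\sigma_{k+1}=\sigma$, the conditions defining $S(X)$ are satisfied: $v_i\notin\sigma_i$ for $i\leq k$, and for $i<j$ we have $v_i\in\eta\setminus\{v_j\}\subseteq\sigma_j$ (if $j\leq k$) or $v_i\in\sigma=\sigma_{k+1}$. Hence $\eta\in S(X)$, so $|\eta|\leq d'(X)$. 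Since $\eta$ is a transversal of $H_\sigma$, it lies in no maximal face of $X$ besides $\sigma$, and we may perform the elementary $d'(X)$-collapse $X\xrightarrow{\eta}X'$.

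It remains to show $X'$ is $d'(X)$-collapsible. Since $|X'|<|X|$, the inductive hypothesis gives that $X'$ is $d'(X')$-collapsible, so it suffices to prove $d'(X')\leq d'(X)$. The maximal faces of $X'$ split into: (A) maximal faces of $X$ other than $\sigma$, and (B) new maximal faces of the form $\sigma\setminus\{v\}$ for some $v\in\eta$, arising when $\sigma\setminus\{v\}$ is contained in no maximal face of $X$ besides $\sigma$. Given $T=\{w_1,\ldots,w_m\}\in S(X')$ with witnesses $\tau_1,\ldots,\tau_{m+1}$, the plan is to convert each type~(B) witness $\tau_j=\sigma\setminus\{v\}$ into a maximal face of $X$: replace it by $\sigma$ itself when $w_j\notin\sigma$, and by the cover-witness $\sigma_v$ (constructed above) in the case $w_j=v\in\eta$. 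In the latter case one inserts the elements of $\eta\setminus\{v\}$ into the sequence at appropriate positions, using the faces $\sigma_u$ as their witnesses, thereby producing an enlarged set $T'\supseteq T$ lying in $S(X)$; then $|T|\leq|T'|\leq d'(X)$.

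The main obstacle is executing this replacement-and-augmentation coherently when multiple type~(B) witnesses occur simultaneously: $\sigma$ can serve as a replacement at most once, and a cover-witness $\sigma_v$ need not contain the earlier vertices $w_i$ that appeared in the original $\tau_j$. Resolving this will likely require a more deliberate choice of $\sigma$ (for instance, one minimizing $|\eta|$ or minimizing the number of type~(B) faces produced), together with a careful ordering argument governing where the auxiliary elements of $\eta$ are inserted into the witness sequence. I expect this bookkeeping to be the technical core of the proof.
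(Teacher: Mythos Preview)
Your first step is correct and elegant: picking a maximal face $\sigma$ and a minimal transversal $\eta$ of $H_\sigma$ does produce a free face with $\eta\in S(X)$, so $|\eta|\le d'(X)$ and the collapse $X\xrightarrow{\eta}X'$ is a legitimate elementary $d'(X)$-collapse.

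The gap you flag, however, is not merely bookkeeping: the inequality $d'(X')\le d'(X)$ is \emph{false} in general for the collapse you describe. Take $X$ with maximal faces $\sigma=\{1,2,3,4\}$, $\tau_1=\{3,4,5\}$, $\tau_2=\{1,2,6\}$. Here $d'(X)=2$ (there are only three maximal faces). With your rule, $H_\sigma=\{\{1,2\},\{3,4\}\}$ and $\eta=\{1,3\}$ is a minimal transversal. After the collapse the maximal faces of $X'$ are $\tau_1,\tau_2,\{2,3,4\},\{1,2,4\}$, and one checks that $\{2,4,1\}\in S(X')$ via the witness sequence $(\tau_1,\tau_2,\{2,3,4\},\{1,2,4\})$, so $d'(X')\ge 3>d'(X)$. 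Thus the induction hypothesis applied to $X'$ gives only $d'(X')$-collapsibility, which is too weak. Your suggested fixes (choosing $\sigma$ to minimise $|\eta|$, or inserting the elements of $\eta$ into the witness sequence) do salvage this particular example, but you have given no argument that such a choice always exists or that the insertion can always be carried out; the ``replacement by $\sigma_v$'' step fails precisely because $\sigma_v$ need not contain the earlier $w_i$, and there is no evident reordering that repairs this in general.

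The paper avoids this obstacle entirely by proving a more refined statement. It fixes a linear order $<$ on $V$ and an enumeration $\mathcal{A}$ of the maximal faces, and associates to each simplex $\sigma$ a \emph{minimal exclusion set} $M_{X,\mathcal{A},<}(\sigma)\subset\sigma$; it then proves, by induction on $|V|$ using Tancer's lemma on deletions and links, that $X$ is $d(X,\mathcal{A},<)$-collapsible where $d(X,\mathcal{A},<)=\max_\sigma |M_{X,\mathcal{A},<}(\sigma)|$. Finally it observes that every $M_{X,\mathcal{A},<}(\sigma)$ lies in $S(X)$, giving $d(X,\mathcal{A},<)\le d'(X)$. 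The crucial difference is that the quantities $d(X\setminus v,\mathcal{A}',<)$ and $d(\lk(X,v),\mathcal{A}'',<)$ \emph{are} controlled by $d(X,\mathcal{A},<)$ (the exclusion sequences transform in a predictable way), whereas $d'(\,\cdot\,)$ does not decrease monotonically under elementary collapses.
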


Theorem \ref{thm:mes_simpler} is a special case of a more general result, due essentially to  Matou{\v{s}}ek and Tancer  (who stated it in the special case where the complex is the nerve of a family of finite sets, and used it to prove the case $p=1$ of Theorem \ref{thm:cov}; see \cite{matousek2008dimension}).

The second ingredient is the following combinatorial lemma, proved independently by Frankl and Kalai.

\begin{lemma}[Frankl \cite{frankl1982anextremal}, Kalai \cite{kalai1984intersection}]
	\label{lem:frankl}
	Let $\{A_1,\ldots, A_k\}$ and $\{B_1,\ldots, B_k\}$ be families of sets such that:
	\begin{itemize}
	\item 	$|A_i|\leq r$, $|B_i|\leq p$ for all $i\in[k]$,
	\item $A_i\cap B_i=\emptyset$ for all $i\in [k]$, \item $A_i\cap B_j\neq \emptyset$ for all $1\leq i<j\leq k$.
	\end{itemize}
	Then
	\[
	k\leq \binom{r+p}{r}.
	\]
\end{lemma}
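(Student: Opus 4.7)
My plan is to prove this lemma through a dimension-counting argument in the exterior algebra, following Alon's algebraic approach. First I would reduce to the uniform case $|A_i|=r$ and $|B_i|=p$: for each $i$ separately, pad $A_i$ with $r-|A_i|$ fresh dummy vertices and $B_i$ with $p-|B_i|$ fresh dummy vertices, using vertices that are new and distinct across the different pairs and across the two families. All three hypotheses survive the padding: $A_i\cap B_i=\emptyset$ stays empty because every dummy is fresh, each intersection $A_i\cap B_j$ with $i<j$ already contained an original element, and the sizes are now exactly $r$ and $p$. So it suffices to prove the bound under this stronger hypothesis.

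Assume then $|A_i|=r$ and $|B_i|=p$. I would embed the ground set as vectors $\{v_x\}\subset V:=\mathbb{R}^{r+p}$ in general linear position, for instance on the moment curve $t\mapsto(1,t,\ldots,t^{r+p-1})$, so that any $r+p$ of the $v_x$ are linearly independent by the Vandermonde determinant. For each $i$ set
\[
  \alpha_i:=\bigwedge_{a\in A_i}v_a\in\Lambda^r V, \qquad \beta_i:=\bigwedge_{b\in B_i}v_b\in\Lambda^p V.
\]
Then $\alpha_i\wedge\beta_j\in\Lambda^{r+p}V\cong\mathbb{R}$ is a scalar that vanishes exactly when some vertex lies in both $A_i$ and $B_j$ (so that $v_x$ appears twice in the wedge), i.e.\ exactly when $A_i\cap B_j\ne\emptyset$; otherwise the $r+p$ vectors are distinct and linearly independent.

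The hypotheses of the lemma therefore say that the scalar matrix $M\in\mathbb{R}^{k\times k}$ with entries $M_{ij}:=\alpha_i\wedge\beta_j$ is lower triangular with nonzero diagonal: $M_{ii}\ne 0$ by $A_i\cap B_i=\emptyset$, and $M_{ij}=0$ for $i<j$ by $A_i\cap B_j\ne\emptyset$. Hence $M$ is invertible, so any relation $\sum_i c_i\alpha_i=0$ in $\Lambda^rV$ yields, upon wedging with each $\beta_j$, the system $\sum_i c_i M_{ij}=0$, forcing $c=0$. Thus $\alpha_1,\ldots,\alpha_k$ are linearly independent in $\Lambda^rV$ and
\[
  k\le\dim\Lambda^rV=\binom{r+p}{r}.
\]
I do not expect a serious technical obstacle once this framework is in place; the principal conceptual step is recognising that the skew intersection hypotheses translate precisely into the lower-triangular structure of the matrix $M$, after which the bound is forced by the dimension of $\Lambda^r V$.
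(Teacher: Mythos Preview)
The paper does not prove this lemma; it is quoted as a known result with citations to Frankl and Kalai, and is used as a black box in the proofs of Theorems~\ref{thm:cov} and~\ref{thm:int}. Your exterior-algebra argument is correct and is one of the standard proofs of this Bollob\'as-type inequality (closely related versions appear in work of Lov\'asz and of Alon). The padding reduction to the uniform case is valid, the general-position embedding via the moment curve guarantees that $\alpha_i\wedge\beta_j$ vanishes exactly when $A_i\cap B_j\ne\emptyset$, and the resulting lower-triangular structure of $M$ with nonzero diagonal forces the $\alpha_i$ to be linearly independent in $\Lambda^r\mathbb{R}^{r+p}$, giving $k\le\binom{r+p}{r}$.
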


The paper is organized as follows. In Section \ref{sec:2} we present Matou{\v{s}}ek and Tancer's bound on the collapsibility of a simplicial complex, and we prove Theorem \ref{thm:mes_simpler}.
In Section \ref{sec:graphs} we present some results on the collapsibility of independence complexes of graphs.
 In Section \ref{sec:3} we prove our main results on the collapsibility of complexes of hypergraphs. Section \ref{sec:4} contains some generalizations of Theorems \ref{thm:cov} and \ref{thm:int}, which are obtained by applying different known variants of Lemma \ref{lem:frankl}.

\section{A bound on the collapsibility of a complex}
\label{sec:2}

Let $X$ be a (non-void) simplicial complex on vertex set $V$. Fix a linear order $<$ on $V$. Let $\mathcal{A}=(\sigma_1,\ldots,\sigma_m)$ be a sequence of faces of $X$ such that, for any $\sigma\in X$, $\sigma\subset\sigma_i$ for some $i\in[m]$. For example, we may take $\sigma_1,\ldots,\sigma_m$ to be the set of maximal faces of $X$ (ordered in any way).

For a simplex $\sigma\in X$, let $m_{X,\mathcal{A},<}(\sigma)=\min\{i\in[m]: \, \sigma\subset \sigma_i\}$.
Let $i\in[m]$ and $\sigma\in X$ such that $m_{X,\mathcal{A},<}(\sigma)=i$. We define the \emph{minimal exclusion sequence}
\[
\mes_{X,\mathcal{A},<}(\sigma)=(v_1,\ldots,v_{i-1})
\]
as follows:
If $i=1$ then $\mes_{X,\mathcal{A},<}(\sigma)$ is the empty sequence. If $i>1$ we define the sequence recursively as follows:

Since $i>1$, we must have $\sigma\not\subset \sigma_1$; hence, there is some $v\in \sigma$ such that $v\notin \sigma_1$. Let $v_1$ be the minimal such vertex (with respect to the order $<$).

Let $1<j<i$ and assume that we already defined $v_1,\ldots,v_{j-1}$. Since $i>j$, we must have $\sigma\not\subset\sigma_j$; hence, there exists some $v\in \sigma$ such that $v\notin \sigma_j$.
 \begin{itemize}
     \item  If there is a vertex $v_k\in \{v_1,\ldots, v_{j-1}\}$ such that $v_k\notin \sigma_j$, let $v_j$ be such a vertex of minimal index $k$. In this case we call $v_j$ \emph{old at $j$}.
     \item If $v_k\in\sigma_j$ for all $k<j$, let $v_j$ be the minimal vertex $v\in\sigma$ (with respect to the order $<$) such that $v\notin \sigma_j$. In this case we call $v_j$ \emph{new at $j$}.
     \end{itemize}

Let $M_{X,\mathcal{A},<}(\sigma)\subset\sigma$ be the simplex consisting of all the vertices appearing in the sequence $\mes_{X,\mathcal{A},<}(\sigma)$.
Let
\[
    d(X,\mathcal{A},<)=\max\{ |M_{X,\mathcal{A},<}(\sigma)|: \, \sigma\in X\}.
\]

The following result was stated and proved in \cite[Prop. 1.3]{matousek2008dimension} in the special case where $X$ is the nerve of a finite family of sets (in our notation, $X=\hcov_{\mathcal{H},1}$ for some hypergraph $\mathcal{H}$). 

\begin{theorem}	\label{prop:collapsible_by_mes}
The simplicial complex $X$ is $d(X,\mathcal{A},<)$-collapsible.
\end{theorem}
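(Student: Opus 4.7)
The plan is to induct on the length $m$ of $\mathcal{A}$. The base case $m=1$ is straightforward: every face of $X$ lies in the unique face $\sigma_1 \in \mathcal{A}$, so $d(X,\mathcal{A},<) = 0$ and a single elementary $0$-collapse with $\eta = \emptyset$ and $\tau = \sigma_1$ reduces $X$ to the void complex.

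For $m > 1$, set $d := d(X,\mathcal{A},<)$ and $T := \{\sigma \in X : m_{X,\mathcal{A},<}(\sigma) = m\}$. The plan is to produce a sequence of elementary $d$-collapses that removes exactly $T$ from $X$, yielding the subcomplex $X' := \{\sigma \in X : m_{X,\mathcal{A},<}(\sigma) < m\}$. Since $X'$ is still covered by $\mathcal{A}' := (\sigma_1, \ldots, \sigma_{m-1})$, and since for every $\sigma \in X'$ the values $m_{X',\mathcal{A}',<}(\sigma)$ and $\mes_{X',\mathcal{A}',<}(\sigma)$ coincide with their $X$-counterparts, one has $d(X',\mathcal{A}',<) \le d$; the inductive hypothesis then reduces $X'$ to $\emptyset$ by elementary $d$-collapses, and concatenation closes the induction.

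The structural observation that drives the collapses is the following: for every $\sigma \in T$ and every $\tau \in X$ with $M_{X,\mathcal{A},<}(\sigma) \subseteq \tau$, we have $\tau \subseteq \sigma_m$. Indeed, at each step $j < m$ in the construction of $\mes_{X,\mathcal{A},<}(\sigma)$ a vertex $v_j \in M_{X,\mathcal{A},<}(\sigma)$ with $v_j \notin \sigma_j$ is available (either reused as ``old'' or selected ``new''), so $M_{X,\mathcal{A},<}(\sigma) \not\subseteq \sigma_j$ for any $j < m$; combined with the covering property of $\mathcal{A}$, this forces any $\tau$ containing $M_{X,\mathcal{A},<}(\sigma)$ into $\sigma_m$. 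In particular, whenever $T \neq \emptyset$ the face $\sigma_m$ is a maximal face of $X$, and it is the unique maximal face of $X$ containing $M_{X,\mathcal{A},<}(\sigma)$ for each $\sigma \in T$. (If $T = \emptyset$ then $\mathcal{A}'$ already covers $X$ and we apply induction directly.)

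The remaining step -- which I expect to be the main obstacle -- is orchestrating the collapses of $T$ in a correct order. The first collapse can be taken to be $(\eta,\tau) = (M_{X,\mathcal{A},<}(\sigma), \sigma_m)$ for some $\sigma \in T$, but this destroys $\sigma_m$; afterwards, for another $\sigma' \in T$, the face $M_{X,\mathcal{A},<}(\sigma')$ may a priori lie inside several new maximal faces of the form $\sigma_m \setminus \{v\}$, where $v$ ranges over $M_{X,\mathcal{A},<}(\sigma) \setminus M_{X,\mathcal{A},<}(\sigma')$. I would handle this by processing elements of $T$ in an order that guarantees uniqueness of the top face at each stage -- for instance, by selecting at each step a $\sigma \in T_{\mathrm{cur}}$ whose $M_{X,\mathcal{A},<}(\sigma)$ is inclusion-maximal among the $M_{X,\mathcal{A},<}(\tau)$ for $\tau \in T_{\mathrm{cur}}$, and using a transversal argument to read off the unique surviving maximal face from the already-processed $M_{X,\mathcal{A},<}(\sigma_i)$'s. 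Verifying the uniqueness at each such stage, probably via an auxiliary induction on the number of collapses performed so far, is where the real work lies.
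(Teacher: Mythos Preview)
Your outline follows the original Matou\v{s}ek--Tancer strategy (induction on the length $m$ of $\mathcal{A}$, peeling off the set $T$ of faces with $m_{X,\mathcal{A},<}(\sigma)=m$), which the paper explicitly mentions but then replaces by a different argument. The paper instead inducts on $|V|$: it chooses the $<$-minimal vertex $v\notin\sigma_1$, verifies that $\mes$ is compatible with deletion and link (so that $d(X\setminus v,\mathcal{A}',<)\le d(X,\mathcal{A},<)$ and $d(\lk(X,v),\mathcal{A}'',<)\le d(X,\mathcal{A},<)-1$ for suitably modified sequences), and then applies Tancer's lemma that $d$-collapsibility of $X\setminus v$ together with $(d-1)$-collapsibility of $\lk(X,v)$ implies $d$-collapsibility of $X$. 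The trade-off is clear: the paper avoids the delicate ordering of collapses entirely, at the cost of checking two bookkeeping claims about how $\mes$ restricts; your route would give an explicit collapse sequence but requires controlling that sequence.

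That said, your proposal is genuinely unfinished at exactly the point you flag, and the specific ordering you suggest needs more than you indicate. Processing by inclusion-maximal $M_{X,\mathcal{A},<}(\sigma)$ does not by itself guarantee a unique maximal face at each stage: once $[\eta^{(1)},\sigma_m]$ is removed, a later $\eta^{(i)}$ with $|\eta^{(1)}\setminus\eta^{(i)}|\ge 2$ lies below several surviving facets $\sigma_m\setminus\{v\}$, and nothing in ``inclusion-maximal first'' distinguishes them. (Note also that the distinct $M$-values need not form an antichain: with $\mathcal{A}=(\{3\},\{1,3\},\{1\},\{1,2,3\})$ one gets $M$-values $\{2\}\subsetneq\{1,2\}$.) What actually makes the Matou\v{s}ek--Tancer argument go through is the idempotence $M_{X,\mathcal{A},<}(M_{X,\mathcal{A},<}(\sigma))=M_{X,\mathcal{A},<}(\sigma)$ together with the fact that $M$ is constant on each interval $[M_{X,\mathcal{A},<}(\sigma),\sigma]$, so that the fibers of $M$ partition $T$ into intervals whose top faces can be identified; one then has to argue that collapsing these intervals in an appropriate order (e.g.\ larger $M$-values first, tracking the current top face at each step) is a legal sequence of elementary collapses. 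You have not supplied this argument, and the ``transversal argument'' you gesture at is not specified. As written, the proof is a correct plan with its central lemma missing.
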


The proof given in \cite{matousek2008dimension} can be easily modified to hold in this more general setting. Here we present a different proof.

Let $X$ be a simplicial complex on vertex set $V$, and let $v\in V$. Let
\[
    X\setminus v= \{ \sigma\in X:\, v\notin \sigma\}
\]
and
\[
    \lk(X,v)=\{\sigma\in X:\, v\notin\sigma,\, \sigma\cup\{v\}\in X\}.
\]

We will need the following lemma, proved by Tancer in \cite{tancer2011strong}:

\begin{lemma}[Tancer {\cite[Prop. 1.2]{tancer2011strong}}]
\label{lemma:tancer}
If $X\setminus v$ is $d$-collapsible and $\lk(X,v)$ is $(d-1)$-collapsible, then $X$ is $d$-collapsible.
\end{lemma}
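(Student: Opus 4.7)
The plan is to produce a $d$-collapse of $X$ by concatenating two sequences of elementary $d$-collapses: first a sequence that removes every face of $X$ containing $v$, obtained by lifting the given collapse sequence of $\lk(X,v)$; then the given collapse sequence for $X\setminus v$ applied to what remains.

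Fix a sequence $\lk(X,v)=Y_1\xrightarrow{\eta_1}Y_2\xrightarrow{\eta_2}\cdots\xrightarrow{\eta_{k-1}}Y_k=\emptyset$ of elementary $(d-1)$-collapses, so that at stage $i$ the simplex $\eta_i$ is contained in a unique maximal face $\tau_i$ of $Y_i$, with $|\eta_i|\leq d-1$. I will construct a parallel sequence $X=X_1\supset X_2\supset\cdots\supset X_k$ of subcomplexes of $X$ by performing the candidate elementary collapses $X_i\xrightarrow{\eta_i\cup\{v\}}X_{i+1}$, while maintaining the two invariants $X_i\setminus v = X\setminus v$ and $\lk(X_i,v)=Y_i$.

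The main point is to check that each candidate collapse is valid, i.e., that $\eta_i\cup\{v\}$ is contained in a unique maximal face of $X_i$. Assuming the invariants hold at stage $i$, the key observation is the bijective correspondence $\rho\leftrightarrow\rho\cup\{v\}$ between the maximal faces of $\lk(X_i,v)=Y_i$ and the maximal faces of $X_i$ that contain $v$: if $\rho\cup\{v\}\subset\sigma\in X_i$ then $\sigma\setminus\{v\}\in Y_i$ strictly contains $\rho$, contradicting maximality of $\rho$; conversely any maximal face of $X_i$ through $v$ must come from such a $\rho$. Since $\eta_i\cup\{v\}$ contains $v$, any maximal face of $X_i$ containing it must contain $v$ and therefore be of the form $\rho\cup\{v\}$ with $\rho\supset\eta_i$ maximal in $Y_i$; by hypothesis the only such $\rho$ is $\tau_i$. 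Hence $\tau_i\cup\{v\}$ is the unique maximal face of $X_i$ containing $\eta_i\cup\{v\}$, and $|\eta_i\cup\{v\}|\leq d$, so the collapse is valid. Direct inspection of the removed faces (those $\sigma\in X_i$ with $v\in\sigma$ and $\eta_i\subset\sigma\setminus\{v\}\subset\tau_i$) shows that no face missing $v$ is touched and that $\lk(X_{i+1},v)=Y_{i+1}$, so both invariants persist.

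After $k-1$ such collapses, $\lk(X_k,v)=\emptyset$ forces $\{v\}\notin X_k$, which together with the first invariant gives $X_k=X\setminus v$. Applying the given sequence of elementary $d$-collapses of $X\setminus v$ then reduces $X_k$ to the void complex, finishing the proof. The only real obstacle is the uniqueness verification above, which is driven entirely by the simple correspondence between maximal faces of $X_i$ through $v$ and maximal faces of $\lk(X_i,v)$.
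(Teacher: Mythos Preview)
The paper does not supply its own proof of this lemma; it merely cites Tancer \cite{tancer2011strong}. Your argument is correct and is essentially the standard one (and the one Tancer gives): lift each elementary $(d-1)$-collapse $\eta_i$ of the link to the elementary $d$-collapse $\eta_i\cup\{v\}$ of $X$, check via the bijection between maximal faces of $\lk(X_i,v)$ and maximal faces of $X_i$ through $v$ that uniqueness of the maximal overface is preserved, verify the two invariants inductively, and finish with the given $d$-collapse of $X\setminus v$. One tiny quibble: in your bijection paragraph the containment ``$\rho\cup\{v\}\subset\sigma$'' should be strict for the conclusion ``$\sigma\setminus\{v\}$ strictly contains $\rho$'' to follow, but this is clearly what you intend.
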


\begin{proof}[Proof of Theorem \ref{prop:collapsible_by_mes}]

First, we deal with the case where $X$ is a complete complex (i.e. a simplex). Then $X$ is $0$-collapsible; therefore, the claim holds.

For a general complex $X$, we argue by induction on the number of vertices of $X$. If $|V|=0$, then $X=\{\emptyset\}$. In particular, it is a complete complex; hence, the claim holds.

Let $|V|>0$, and assume that the claim holds for any complex with less than $|V|$ vertices. If $\sigma_1=V$, then $X$ is the complete complex on vertex set $V$, and the claim holds. Otherwise, let $v$ be the minimal vertex (with respect to $<$) in $V\setminus\sigma_1$.

In order to apply Lemma \ref{lemma:tancer}, we will need the following two claims:

\begin{claim}\label{claim:deletion}
 The complex $X\setminus v$ is $d(X, \mathcal{A},<)$-collapsible.
\end{claim}
\begin{proof}
For every $i\in[m]$, let $\sigma_i'=\sigma_i\setminus \{v\}$, and let $\mathcal{A}'=(\sigma_1',\ldots,\sigma_m')$.
Let $\sigma\in X\setminus v$. Since $v\notin \sigma$, then, for any $i\in[m]$, $\sigma\subset \sigma_i$ if and only if $\sigma\subset\sigma_i'$. 
Hence, every simplex  $\sigma\in X\setminus v$ is contained in $\sigma_i'$ for some $i\in[m]$ (since, by the definition of $\mathcal{A}$, $\sigma\subset \sigma_i$ for some $i\in[m]$). So, by the induction hypothesis, $X\setminus v$ is $d(X\setminus v,\mathcal{A}',<)$-collapsible.

Let $\sigma\in X\setminus v$.  We will show that $\mes_{X,\mathcal{A},<}(\sigma)=\mes_{X\setminus v,\mathcal{A}',<}(\sigma)$. Since for any $i\in[m]$, $\sigma\subset \sigma_i$ if and only if $\sigma\subset \sigma_i'$, then the two sequences are of the same length. Let 
\[\mes_{X,\mathcal{A},<}(\sigma)=(v_1,\ldots,v_{k})\] 
and 
\[\mes_{X\setminus v,\mathcal{A}',<}(\sigma)=(v'_1,\ldots,v'_{k}).\]
We will show that $v_i=v'_i$ for all $i\in[k]$. We argue by induction on $i$. Let $i\in [k]$, and assume that $v_{j}=v'_{j}$ for all $j<i$. 
Since $v\notin \sigma$, then $\sigma\setminus\sigma_i=\sigma\setminus\sigma_i'$. Therefore, for any $j<i$,  $v_{j}\in \sigma\setminus \sigma_i$ if and only if $v_{j}'=v_{j}\in\sigma\setminus \sigma_i'$. Hence, $v_i$ is old at $i$ if and only if $v_i'$ is old at $i$, and if $v_i$ and $v_i'$ are both old at $i$, then $v_i=v_i'$. Otherwise, 
both $v_i$ and $v_i'$ are new at $i$. Then, $v_i$ is the minimal vertex in $\sigma\setminus \sigma_i$, and $v_i'$ is the minimal vertex in $\sigma\setminus\sigma_i'=\sigma\setminus\sigma_i$. Thus, $v_i=v_i'$.

Therefore, $|M_{X\setminus v, \mathcal{A}',<}(\sigma)|=|M_{X, \mathcal{A},<}(\sigma)|$ for any $\sigma\in X\setminus v$; hence,
\[
d(X\setminus v,\mathcal{A}',<)\leq d(X,\mathcal{A},<).\]
So, $X\setminus v$ is $d(X,\mathcal{A},<)$-collapsible.
\end{proof}

\begin{claim}\label{claim:link}
The complex $\lk(X,v)$ is $(d(X,\mathcal{A},<)-1)$-collapsible.
\end{claim}
\begin{proof}
 Let $I=\{i\in[m]:\, v\in \sigma_i\}$.
For every $i\in I$, let $\sigma_i''=\sigma_i\setminus \{v\}$. Write $I=\{i_1,\ldots,i_r\}$, where $i_1<\cdots<i_r$, and let $\mathcal{A}''=(\sigma_{i_1}'',\ldots,\sigma_{i_r}'')$.

For any $\sigma\in\lk(X,v)$, the simplex $\sigma\cup\{v\}$ belongs to $X$; hence,  there exists some $i\in[m]$ such that $\sigma\cup\{v\}\subset\sigma_i$. Since $v\in \sigma\cup\{v\}$, we must have $i\in I$, and therefore $\sigma\subset \sigma_i''=\sigma_i\setminus\{v\}$. So, by the induction hypothesis, $\lk(X,v)$ is $d(\lk(X,v),\mathcal{A}'',<)$-collapsible.

Let $\sigma\in \lk(X,v)$. We will show that \[
M_{X,\mathcal{A},<}(\sigma\cup\{v\})=M_{\lk(X,v),\mathcal{A}'',<}(\sigma)\cup\{v\}.\]
Let
\[
\mes_{X,\mathcal{A},<}(\sigma\cup\{v\})=(v_1,\ldots,v_n),\]
and
\[\mes_{\lk(X,v),\mathcal{A}'',<}(\sigma)=(u_1,\ldots,u_t).\]
For any $j\in[r]$,  $\sigma\subset \sigma_{i_j}''$ if and only if $\sigma\cup\{v\}\subset \sigma_{i_j}$. Also, for $i\notin I$, $\sigma\cup\{v\}\not\subset\sigma_i$ (since $v\notin\sigma_i$). Therefore, $n=i_{t+1}-1$. 

The vertex $v$ is the minimal vertex in $V\setminus \sigma_1$, therefore it is the minimal vertex in $(\sigma\cup\{v\})\setminus\sigma_1$. Hence, we have $v_1=v$. Now, let $i>1$ such that $i\notin I$. Then, $v_1=v$ is the vertex of minimal index in the sequence $(v_1,\ldots,v_{i-1})$ that is contained in $(\sigma\cup\{v\})\setminus \sigma_i$. Therefore, $v_i=v$.

Finally, we will show that $v_{i_j}=u_j$ for all $j\in[t]$. We argue by induction on $j$. Let $j\in[t]$, and assume that $v_{i_{\ell}}=u_{\ell}$ for all $\ell<j$.

For any $k<i_j$, either $v_k=v$ (if $k\notin I$) or $v_k=u_{\ell}$ for some $\ell<j$ (if $k=i_{\ell}\in I$).
Also, since $v\in \sigma_{i_j}$, we have $(\sigma\cup\{v\})\setminus\sigma_{i_j}= \sigma\setminus \sigma_{i_j}''$.
So, for any $k<i_j$, $v_k\in (\sigma\cup\{v\})\setminus\sigma_{i_j}$ if and only if $k=i_\ell$ for some $\ell<j$ such that $u_{\ell}\in \sigma\setminus\sigma_{i_j}''$.
Therefore, $v_{i_j}$ is old at $i_j$ if and only if $u_j$ is old at $j$, and if $v_{i_j}$ and $u_j$ are both old, then $v_{i_j}=u_j$. Otherwise, assume that $v_{i_j}$ is new at $i_j$ and $u_j$ is new at $j$. Then, $v_{i_j}$ is the minimal vertex in $(\sigma\cup\{v\})\setminus \sigma_{i_j}$, and $u_j$ is the minimal vertex in  $\sigma\setminus \sigma_{i_j}''=(\sigma\cup\{v\})\setminus\sigma_{i_j}$. Thus, $v_{i_j}=u_j$.

So, for any $\sigma\in \lk(X,v)$ we obtain  
\[
|M_{\lk(X,v),\mathcal{A}'',<}(\sigma)|=|M_{X,\mathcal{A},<}(\sigma\cup\{v\})|-1.\]
Hence, 
\[
d(\lk(X,v),\mathcal{A}'',<)\leq d(X,\mathcal{A},<)-1.
\]
So, $\lk(X,v)$ is $(d(X,\mathcal{A},<)-1)$-collapsible.

\end{proof}

By Claim \ref{claim:deletion}, Claim \ref{claim:link} and Lemma \ref{lemma:tancer}, $X$ is $d(X,\mathcal{A},<)$-collapsible.

\end{proof}

\begin{proof}[Proof of Theorem \ref{thm:mes_simpler}]
Let $<$ be some linear order on the vertex set $V$, and let $\mathcal{A}=(\sigma_1,\ldots,\sigma_m)$ be the sequence of maximal faces of $X$ (ordered in any way).

Let $i\in [m]$ and let $\sigma\in X$ with $m_{X,\mathcal{A},<}(\sigma)=i$.
Let $\mes_{X,\mathcal{A},<}(\sigma)=(v_1,\ldots,v_{i-1})$. Then $M_{X,\mathcal{A},<}(\sigma)=\{v_{i_1},\ldots,v_{i_k}\}$ for some $i_1<\cdots<i_k\in [i-1]$ (these are exactly the indices $i_j$ such that $v_{i_j}$ is new at $i_j$). For each $j\in[k]$ we have $v_{i_j}\notin \sigma_{i_j}$.  In addition, since $v_{i_j}$ is new at $i_j$,  we have $v_{i_{\ell}}\in \sigma_{i_{j}}$ for all $\ell<j$. Let $i_{k+1}=i$. Since $m_{X,\mathcal{A},<}(\sigma)=i=i_{k+1}$, we have $\sigma\subset \sigma_{i_{k+1}}$. In particular, $v_{i_{\ell}}\in \sigma_{i_{k+1}}$ for all $\ell<k+1$.

Therefore, $M_{X,\mathcal{A},<}(\sigma)\in S(X)$. Thus, $d(X,\mathcal{A},<)\leq d'(X)$, and by Theorem \ref{prop:collapsible_by_mes}, $X$ is $d'(X)$-collapsible.
\end{proof}

\section{Collapsibility of independence complexes}
\label{sec:graphs}
Let $G=(V,E)$ be a graph. The independence complex $I(G)$ is the simplicial complex on vertex set $V$ whose simplices are the independent sets in $G$. 

\begin{definition}\label{def:k}
Let $k(G)$ be the maximal size of a set $\{v_1,\ldots,v_k\}\subset V$ that satisfies:
\begin{itemize}
    \item $\{v_i,v_j\}\notin E$ for all $i\neq j\in[k]$,
    \item There exist $u_1,\ldots,u_k\in V$ such that 
    \begin{itemize}
        \item $\{v_i,u_i\}\in E$ for all $i\in[k]$,
        \item $\{v_i,u_j\}\notin E$ for all $1\leq i<j\leq k$.
    \end{itemize}
\end{itemize}
\end{definition}

\begin{proposition}\label{prop:kisdprime}   
   $k(G)=d'(I(G))$.
\end{proposition}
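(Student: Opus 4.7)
The plan is to prove $k(G) = d'(I(G))$ by establishing both inequalities through direct translation between the two definitions, using the fact that the maximal faces of $I(G)$ are precisely the maximal independent sets of $G$.

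For the inequality $d'(I(G)) \geq k(G)$, I would start with a witness $\{v_1,\ldots,v_k\}$ for $k(G)$ together with its auxiliary vertices $u_1,\ldots,u_k$, and construct maximal independent sets $\sigma_1,\ldots,\sigma_{k+1}$ satisfying the conditions from Theorem \ref{thm:mes_simpler}. For each $j\in[k]$, I first observe that the set $\{v_1,\ldots,v_{j-1},u_j\}$ is independent in $G$: the $v_i$'s are pairwise non-adjacent by the first bullet of Definition \ref{def:k}, and $\{v_i,u_j\}\notin E$ for $i<j$ by the last bullet. I extend this set to a maximal independent set $\sigma_j$; since $u_j\in\sigma_j$ and $\{v_j,u_j\}\in E$, we necessarily have $v_j\notin\sigma_j$. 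For $\sigma_{k+1}$, I simply extend $\{v_1,\ldots,v_k\}$ itself to a maximal independent set. The requirement $v_i\in\sigma_j$ for $i<j$ then holds by construction.

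For the reverse inequality $k(G) \geq d'(I(G))$, I would begin with a set $\{v_1,\ldots,v_k\}\in S(I(G))$ witnessed by maximal independent sets $\sigma_1,\ldots,\sigma_{k+1}$, and extract the auxiliary vertices $u_j$ directly from the $\sigma_j$'s. Pairwise non-adjacency of the $v_i$'s follows from their common membership in the independent set $\sigma_{k+1}$. For each $j\in[k]$, since $\sigma_j$ is a maximal independent set but does not contain $v_j$, maximality forces some neighbor $u_j$ of $v_j$ to lie in $\sigma_j$. For any $i<j$, the hypothesis gives $v_i\in\sigma_j$, so both $v_i$ and $u_j$ lie in the independent set $\sigma_j$, which yields $\{v_i,u_j\}\notin E$.

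The translation in both directions is essentially mechanical, so there is no substantial obstacle. The key observation used repeatedly is that a maximal independent set missing a vertex $v$ must contain some neighbor of $v$, together with the fact that independence is inherited by subsets; these are the two facts that let one move between the formulation in terms of maximal faces of $I(G)$ and the purely graph-theoretic formulation in Definition \ref{def:k}.
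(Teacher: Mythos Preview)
Your proposal is correct and follows essentially the same argument as the paper's proof: both directions use the same constructions (extending $\{v_1,\ldots,v_{j-1},u_j\}$ and $\{v_1,\ldots,v_k\}$ to maximal independent sets in one direction, and invoking maximality of $\sigma_j$ to produce a neighbor $u_j\in\sigma_j$ of $v_j$ in the other). The only difference is that you present the two inequalities in the opposite order.
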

\begin{proof}
	Let	 $A=\{v_1,\ldots,v_{k}\}\in S(I(G))$. Then, there exist maximal faces $\sigma_1,\ldots,\sigma_{k+1}$ of $I(G)$ such that:
	\begin{itemize}
	    \item $v_i\notin\sigma_i$ for all $i\in[k]$,
	    \item $v_i\in \sigma_j$ for all $1\leq i<j \leq k+1$.
	\end{itemize}
	Let $i\in[k]$. Since $\sigma_i$ is a maximal independent set in $G$ and $v_i\notin \sigma_i$, there exists some $u_{i}\in \sigma_{i}$ such that $\{v_{i},u_{i}\}\in E$.
	
	Let $1\leq i< j\leq k$. Since $v_{i}$ and $u_j$ are both contained in the independent set $\sigma_j$, we have $\{v_{i},u_{j}\}\notin E$. Furthermore, since  $A\subset\sigma_{k+1}$, $A$ is an independent set in $G$. That is, $\{v_{i},v_{j}\}\notin E$ for all $i\neq j\in[k]$. 
	So, $A$ satisfies the conditions of Definition \ref{def:k}.
	Hence, $|A|\leq k(G)$; therefore, $d'(I(G))\leq k(G)$.
	
    Now, let $k=k(G)$, and let $v_1,\ldots,v_k,u_1,\ldots,u_k\in V$ such that
    \begin{itemize}
    \item $\{v_i,v_j\}\notin E$ for all $i\neq j\in[k]$,
    \item $\{v_i,u_i\}\in E$ for all $i\in[k]$,
    \item $\{v_i,u_j\}\notin E$ for all $1\leq i<j\leq k$.
	\end{itemize}
	Let $i\in[k]$, and let $V_i=\{v_j:\, 1\leq j<i\}$. Note that $V_i\cup \{u_i\}$ forms an independent set in $G$; therefore, it is a simplex in $I(G)$. Let $\sigma_i$ be a maximal face of $I(G)$ containing $V_i\cup \{u_i\}$. Since $\{v_i,u_i\}\in E$, we have $v_i\notin \sigma_i$.
	
	The set $\{v_1,\ldots,v_k\}$ is also an independent set in $G$. Therefore, there is a maximal face $\sigma_{k+1}\in I(G)$ that contains it. 
	
    	By the definition of $\sigma_1,\ldots,\sigma_{k+1}$, we have $v_i\in \sigma_j$ for $1\leq i<j\leq k+1$. Therefore, $\{v_1,\ldots,v_k\}\in S(I(G))$; so, $k(G)=k\leq d'(I(G))$.
    	
    Hence, $k(G)=d'(I(G))$, as wanted.
	
\end{proof}

As an immediate consequence of Proposition \ref{prop:kisdprime} and Theorem \ref{thm:mes_simpler}, we obtain:
\begin{proposition}
	\label{prop:clique}
	The complex $I(G)$ is $k(G)$-collapsible.
\end{proposition}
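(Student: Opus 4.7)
The plan is to combine the two results that immediately precede the proposition, with no additional argument required. Proposition \ref{prop:kisdprime} has already established the identity $k(G)=d'(I(G))$, which translates the graph-theoretic parameter of Definition \ref{def:k} into the complex-theoretic invariant $d'$ coming from Theorem \ref{thm:mes_simpler}. Meanwhile, Theorem \ref{thm:mes_simpler} asserts that every simplicial complex $X$ is $d'(X)$-collapsible.

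Concretely, my proof would read: apply Theorem \ref{thm:mes_simpler} to the simplicial complex $X=I(G)$ to conclude that $I(G)$ is $d'(I(G))$-collapsible, and then substitute $d'(I(G)) = k(G)$ using Proposition \ref{prop:kisdprime}. This immediately yields $k(G)$-collapsibility of $I(G)$.

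There is no genuine obstacle, since all the combinatorial content was already extracted in Proposition \ref{prop:kisdprime} (where one checks that witnesses for $S(I(G))$ correspond exactly to configurations $\{v_1,\ldots,v_k\}$ together with $u_1,\ldots,u_k$ as in Definition \ref{def:k}, using that maximal faces of $I(G)$ are maximal independent sets of $G$). The only thing to verify at this stage is that both results are indeed applicable: Theorem \ref{thm:mes_simpler} applies to any simplicial complex, and $I(G)$ is a finite simplicial complex on vertex set $V$. Thus the proof is a one-line assembly of the two ingredients.
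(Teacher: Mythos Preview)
Your proposal is correct and matches the paper's own treatment exactly: the paper states Proposition~\ref{prop:clique} as an immediate consequence of Proposition~\ref{prop:kisdprime} and Theorem~\ref{thm:mes_simpler}, with no further argument. Your one-line assembly of these two ingredients is precisely what is intended.
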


Note that vertices $v_1,\ldots,v_k,u_1,\ldots,u_k\in V$ satisfying the conditions in Definition \ref{def:k} must all be distinct. As a simple corollary, we obtain
\begin{corollary}\label{cor:n2}
	The independence complex of a graph $G=(V,E)$ on $n$ vertices is $\left\lfloor  \frac{n}{2} \right\rfloor$- collapsible.
\end{corollary}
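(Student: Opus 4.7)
The plan is to obtain the bound directly from Proposition~\ref{prop:clique}, which states that $I(G)$ is $k(G)$-collapsible. Since $d$-collapsibility is monotone in $d$, it suffices to prove the purely combinatorial inequality $k(G)\leq \lfloor n/2 \rfloor$. The author already flagged the crux: the $2k$ vertices $v_1,\ldots,v_k,u_1,\ldots,u_k$ witnessing the value of $k(G)$ must all be distinct elements of $V$, which immediately forces $2k\leq n$.

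So the work reduces to verifying distinctness of the $2k$ witnesses under the three conditions of Definition~\ref{def:k}. First, $v_1,\ldots,v_k$ are distinct by the very fact that $\{v_1,\ldots,v_k\}$ has size $k$. Second, for every $i$, $v_i\neq u_i$ because $\{v_i,u_i\}\in E$ and $G$ has no loops. Third, for $i\neq j$ we cannot have $u_i=v_j$: this would rewrite $\{v_i,u_i\}$ as $\{v_i,v_j\}$, an edge that is forbidden by the first condition of Definition~\ref{def:k}. Finally, for $i<j$ we cannot have $u_i=u_j$, because then $\{v_i,u_i\}=\{v_i,u_j\}$ would be simultaneously in $E$ (by the second condition) and not in $E$ (by the third condition).

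Putting these four observations together, the multiset $\{v_1,\ldots,v_k,u_1,\ldots,u_k\}$ actually consists of $2k$ distinct vertices of $G$, so $2k\leq n$ and hence $k(G)\leq \lfloor n/2 \rfloor$. Combined with Proposition~\ref{prop:clique}, this yields that $I(G)$ is $\lfloor n/2 \rfloor$-collapsible, as required.

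There is no real obstacle here; the only thing to watch is to use each of the three conditions of Definition~\ref{def:k} in the correct case of the distinctness check, and to remember that collapsibility is monotone so that the inequality $k(G)\leq \lfloor n/2\rfloor$ (rather than equality) is enough to conclude.
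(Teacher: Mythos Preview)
Your argument is correct and is exactly the approach the paper takes: it simply asserts that the $2k$ witnesses $v_1,\ldots,v_k,u_1,\ldots,u_k$ from Definition~\ref{def:k} are all distinct and invokes Proposition~\ref{prop:clique}, while you have additionally spelled out the easy case analysis that verifies this distinctness.
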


\section{Complexes of hypergraphs}
\label{sec:3}

In this section we prove our main results, Theorems \ref{thm:cov} and \ref{thm:int}.

\begin{proof}[Proof of Theorem \ref{thm:cov}]
	
	Let $\mathcal{H}$ be a hypergraph of rank $r$ on vertex set $[n]$, and let 
	\[
	\{A_1,\ldots, A_k\}\in S(\text{Cov}_{\mathcal{H},p}).
	\]
	Then, there exist maximal faces $\mathcal{F}_1,\ldots,\mathcal{F}_{k+1}\in \text{Cov}_{\mathcal{H},p}$ such that 
		\begin{itemize}
			\item $A_i\notin \mathcal{F}_i$ for all $i\in[k]$,
			\item $A_i\in \mathcal{F}_j$ for all $1\leq i<j\leq k+1$.
		\end{itemize}
For any $i\in[k+1]$, there is some $C_i\subset [n]$ of size at most $p$ that covers $\mathcal{F}_i$. Since $\mathcal{F}_i$ is maximal, then, for any $A\in\mathcal{H}$, $A\in\mathcal{F}_i$ if and only if $A\cap C_i\neq \emptyset$.
Therefore, we obtain
		\begin{itemize}
			\item $A_i\cap C_i=\emptyset$ for all $i\in [k]$,
			\item $A_i\cap C_j\neq \emptyset$ for all $1\leq i<j\leq k+1$.
		\end{itemize}	    
Hence, the pair of families
	\[\{A_{1},\ldots	A_{k},\emptyset\}\]
	and
	\[
	\{C_{1},\ldots,C_{k},C_{k+1}\}\]
	satisfies the conditions of Lemma \ref{lem:frankl}; thus, $k+1\leq \binom{r+p}{r}$. Therefore,
	\[
		d'(\hcov_{\mathcal{H},p})\leq \binom{r+p}{r}-1,
	\]
	and by Theorem \ref{thm:mes_simpler}, $\hcov_{\mathcal{H},p}$ is $\left(\binom{r+p}{r}-1\right)$-collapsible.

\end{proof}

\begin{proof}[Proof of Theorem \ref{thm:int}]

	Let $\mathcal{H}$ be a hypergraph of rank $r$ and let $G$ be the graph on vertex set $\mathcal{H}$ whose edges are the pairs $\{A,B\}\subset\mathcal{H}$ such that $A\cap B=\emptyset$. Then $\intcomplex_{\mathcal{H}}= I(G)$.
	
	Let $k=k(G)$ and let $\{A_1,\ldots, A_k\}\subset \mathcal{H}$ that satisfies the conditions of Definition \ref{def:k}. That is,
	\begin{itemize}
	    \item $A_i\cap A_j\neq \emptyset$ for all $i\neq j\in [k]$,
	    \item There exist $B_1,\ldots, B_k\in \mathcal{H}$ such that
	    \begin{itemize}
	    \item $A_i\cap B_i=\emptyset$ for all $i\in[k]$,
	    \item $A_i\cap B_j\neq \emptyset$ for all $1\leq i<j\leq k$.
	    \end{itemize}
	\end{itemize}
    
    Then, the pair of families \[\{A_{1},\ldots,A_{k},B_{k},\ldots, B_{1}\}\] and \[\{B_{1},\ldots,B_{k},A_{k},\ldots, A_{1}\}\] satisfies the conditions of Lemma \ref{lem:frankl}; therefore, $2k\leq \binom{2r}{r}$. Thus, by Proposition \ref{prop:clique}, $\intcomplex_{\mathcal{H}}=I(G)$ is $\frac{1}{2}\binom{2r}{r}$-collapsible.
\end{proof}

\section{More complexes of hypergraphs}
	\label{sec:4}
	
Let $\mathcal{H}$ be a hypergraph. A set $C$ is a \emph{$t$-transversal} of $\mathcal{H}$ if $|A\cap C|\geq t$ for all $A\in\mathcal{H}$. Let $\tau_t(\mathcal{H})$ be the minimal size of a $t$-transversal of $\mathcal{H}$. The hypergraph $\mathcal{H}$ is \emph{pairwise $t$-intersecting} if $|A\cap B|\geq t$ for all $A,B\in\mathcal{H}$.
Let
\[
	\hcov_{\mathcal{H},p}^t= \{ \mathcal{F}\subset\mathcal{H}: \, \tau_t(\mathcal{F})\leq p\}
\]
and
\[
	\intcomplex_{\mathcal{H}}^t=\{ \mathcal{F}\subset\mathcal{H}:\, \mathcal{F} \text{ is pairwise $t$-intersecting}\}.
\]

The following generalization of Lemma \ref{lem:frankl} was proved by F{\"u}redi in \cite{furedi1984geometrical}.

\begin{lemma}[F{\"u}redi \cite{furedi1984geometrical}]
	\label{lem:furedi}
	Let $\{A_1,\ldots, A_k\}$ and $\{B_1,\ldots, B_k\}$ be families of sets such that:
	\begin{itemize}
	    \item $|A_i|\leq r$, $|B_i|\leq p$ for all $i\in[k]$,
	    \item $|A_i\cap B_i|\leq t$ for all $i\in [k]$,
	    \item $|A_i\cap B_j|>t$ for all $1\leq i<j\leq k$.
	  \end{itemize}
	Then
	\[
	k\leq \binom{r+p-2t}{r-t}.
	\]
\end{lemma}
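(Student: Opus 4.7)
The plan is to prove Lemma \ref{lem:furedi} by the exterior algebra (skew-symmetric tensor) method, which is the standard technique underlying the proofs of Lemma \ref{lem:frankl} by Frankl and Kalai. The suggestive fact is that the target bound $\binom{r+p-2t}{r-t}$ equals $\dim_{\mathbb{R}}\Lambda^{r-t}(\mathbb{R}^{r+p-2t})$, so the approach is to associate to each index $i\in[k]$ a vector $\alpha_i$ in this $\binom{r+p-2t}{r-t}$-dimensional space and show that the $\alpha_i$ are linearly independent, typically by constructing dual vectors $\beta_j$ so that the matrix $\bigl(\alpha_i\wedge\beta_j\bigr)_{i,j}$ is upper triangular with nonzero diagonal.

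First, by appending ``dummy'' elements (new elements not appearing elsewhere) to each $A_i$ and each $B_i$, I would reduce to the normalized case $|A_i|=r$, $|B_i|=p$, and $|A_i\cap B_i|=t$ exactly, without affecting the values $|A_i\cap B_j|$ for $i\neq j$. Set $C_i = A_i\cap B_i$, $\hat A_i = A_i\setminus C_i$, $\hat B_i = B_i\setminus C_i$, of sizes $t$, $r-t$, $p-t$ respectively. The naive attempt would be to fix generic vectors $v_x\in \mathbb{R}^{r+p-2t}$ indexed by the ground set and set $\alpha_i=\bigwedge_{x\in \hat A_i} v_x$ and $\beta_i=\bigwedge_{x\in \hat B_i} v_x$; by genericity $\alpha_i\wedge\beta_i\neq 0$, and the remaining task is to show $\alpha_i\wedge\beta_j=0$ for $i<j$.

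This last step is the main obstacle. The hypothesis $|A_i\cap B_j|\geq t+1$ does not directly force a repetition among the vectors indexed by $\hat A_i\cup \hat B_j$, since the witnessing intersection elements could all be ``hidden'' inside $C_i\cup C_j$. To circumvent this, I would replace the naive wedge by a symmetrized expression: roughly, define $\alpha_i$ as a signed sum over all $(r-t)$-subsets of $A_i$ (equivalently, over all choices of a $t$-subset of $A_i$ to strip away), and similarly for $\beta_i$. With the signs chosen correctly, the off-diagonal pairing $\alpha_i\wedge\beta_j$ expands as a sum indexed by the $t$-subsets of $A_i\cap B_j$ which vanishes as soon as $|A_i\cap B_j|\geq t+1$, while the diagonal pairing $\alpha_i\wedge\beta_i$ retains a single nonzero term coming from $C_i$. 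Making this symmetrization work with the right signs is the delicate part; once the triangular matrix is in place, the bound $k\leq \binom{r+p-2t}{r-t}$ follows by the same dimension count as in Lemma \ref{lem:frankl}.
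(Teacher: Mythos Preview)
The paper does not prove Lemma~\ref{lem:furedi}; it is cited from~\cite{furedi1984geometrical} and used as a black box, so there is no in-paper proof to compare your attempt to.

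On the merits: the framework, the normalization to $|A_i|=r$, $|B_i|=p$, $|A_i\cap B_i|=t$, and the diagnosis of why the naive wedge $\bigwedge_{x\in\hat A_i}v_x$ fails are all correct. The gap is the symmetrization, which is exactly where the difficulty sits and which you have not carried out --- and your description of what should happen is not right for any straightforward choice. Take the natural candidate $\alpha_i=\sum_{S\in\binom{A_i}{r-t}}v_S$, $\beta_j=\sum_{T\in\binom{B_j}{p-t}}v_T$ (elements wedged in a fixed increasing order, all coefficients $+1$). For $r=p=4$, $t=2$, $A_i=\{1,2,3,4\}$, $B_j=\{1,2,3,5\}$ (so $|A_i\cap B_j|=t+1=3$), a direct computation gives
\[
\alpha_i\wedge\beta_j=v_1\wedge v_2\wedge v_3\wedge v_4+v_1\wedge v_2\wedge v_3\wedge v_5\neq 0.
\]
More structurally, the surviving disjoint pairs $(S,T)$ in $\alpha_i\wedge\beta_j$ are not ``indexed by the $t$-subsets of $A_i\cap B_j$'' as you claim, and on the diagonal $\alpha_i\wedge\beta_i$ there are many disjoint pairs beyond $(\hat A_i,\hat B_i)$, so ``a single nonzero term coming from $C_i$'' is also false. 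Since the coefficients attached to $\alpha_i$ cannot depend on $j$ and the families $\{A_i\},\{B_j\}$ are arbitrary, you would need a cancellation identity that holds uniformly in $A_i,B_j$ whenever $|A_i\cap B_j|\geq t+1$; the example shows the obvious candidates fail, and you give no construction that succeeds. F\"uredi's actual argument is genuinely more delicate than the $t=0$ exterior-algebra proof of Lemma~\ref{lem:frankl}, and your sketch stops precisely where that extra work begins.
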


We obtain the following:

\begin{theorem}
	Let $\mathcal{H}$ be a hypergraph of rank $r$ and let $t\leq\min\{r,p\}-1$. Then $\hcov^{t+1}_{\mathcal{H},p}$ is $\left(\binom{r+p-2t}{r-t}-1\right)$-collapsible.
\end{theorem}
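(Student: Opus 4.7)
The plan is to imitate the proof of Theorem \ref{thm:cov} almost verbatim, substituting F\"uredi's Lemma \ref{lem:furedi} for the Frankl--Kalai Lemma \ref{lem:frankl}. Concretely, I would start with an arbitrary element $\{A_1,\ldots,A_k\}\in S(\hcov^{t+1}_{\mathcal{H},p})$, witnessed by maximal faces $\mathcal{F}_1,\ldots,\mathcal{F}_{k+1}$ of $\hcov^{t+1}_{\mathcal{H},p}$ with $A_i\notin\mathcal{F}_i$ and $A_i\in\mathcal{F}_j$ for $1\leq i<j\leq k+1$. For each $i$, choose a $(t+1)$-transversal $C_i\subset[n]$ of $\mathcal{F}_i$ with $|C_i|\leq p$.

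The key observation mirrors the one used for covers: by maximality of $\mathcal{F}_i$, for every $A\in\mathcal{H}$ we have $A\in \mathcal{F}_i$ if and only if $|A\cap C_i|\geq t+1$ (the ``if'' direction is because $C_i$ remains a $(t+1)$-transversal of $\mathcal{F}_i\cup\{A\}$, so adding $A$ keeps us in the complex, forcing $A\in\mathcal{F}_i$ by maximality; the ``only if'' direction is immediate from $C_i$ being a $(t+1)$-transversal of $\mathcal{F}_i$). Hence $A_i\notin\mathcal{F}_i$ yields $|A_i\cap C_i|\leq t$, while $A_i\in\mathcal{F}_j$ for $i<j\leq k+1$ yields $|A_i\cap C_j|\geq t+1>t$.

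I would then apply Lemma \ref{lem:furedi} to the two families of length $k+1$
\[
\{A_1,\ldots,A_k,\emptyset\}\quad\text{and}\quad\{C_1,\ldots,C_k,C_{k+1}\}.
\]
The size conditions hold trivially (note $|\emptyset|=0\leq r$), the diagonal condition at the appended coordinate is $|\emptyset\cap C_{k+1}|=0\leq t$ (using $t\geq 0$, which is implicit in the hypothesis $t\leq\min\{r,p\}-1$ together with nonnegativity of $t$), and the off-diagonal condition $|A_i\cap C_j|>t$ for $1\leq i<j\leq k+1$ was verified above. Lemma \ref{lem:furedi} therefore gives $k+1\leq\binom{r+p-2t}{r-t}$, so $d'(\hcov^{t+1}_{\mathcal{H},p})\leq\binom{r+p-2t}{r-t}-1$, and Theorem \ref{thm:mes_simpler} closes the argument.

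I do not expect a real obstacle: the whole structure of the original proof transplants cleanly, and the only nontrivial point to double-check is the bi-implication ``$A\in\mathcal{F}_i\iff|A\cap C_i|\geq t+1$'' in the $t$-transversal setting, which just amounts to observing that unions of faces inherit a common $(t+1)$-transversal. Everything else is bookkeeping against Lemma \ref{lem:furedi}'s hypotheses.
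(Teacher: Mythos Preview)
Your proposal is correct and follows exactly the approach the paper itself indicates: replicate the proof of Theorem~\ref{thm:cov} with F\"uredi's Lemma~\ref{lem:furedi} in place of Lemma~\ref{lem:frankl}. The maximality argument for the bi-implication $A\in\mathcal{F}_i\iff|A\cap C_i|\geq t+1$ and the appending of $\emptyset$ to form a length-$(k+1)$ system are handled correctly.
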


\begin{theorem}
	Let $\mathcal{H}$ be a hypergraph of rank $r$ and let $t\leq r-1$. Then  $\intcomplex^{t+1}_{\mathcal{H}}$ is $\frac{1}{2}\binom{2(r-t)}{r-t}$-collapsible.
\end{theorem}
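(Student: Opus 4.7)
The plan is to mimic the proof of Theorem \ref{thm:int}, substituting F\"uredi's Lemma \ref{lem:furedi} for the Frankl--Kalai Lemma \ref{lem:frankl}. First, I would introduce the graph $G$ on vertex set $\mathcal{H}$ whose edges are the pairs $\{A,B\}$ with $|A\cap B|\leq t$, so that $\mathcal{F}\subset \mathcal{H}$ is pairwise $(t+1)$-intersecting precisely when $\mathcal{F}$ is independent in $G$; thus $\intcomplex^{t+1}_{\mathcal{H}} = I(G)$. By Proposition \ref{prop:clique}, it then suffices to show that $k(G)\leq \frac{1}{2}\binom{2(r-t)}{r-t}$.

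Next, I would fix a set $\{A_1,\ldots,A_k\}$ realizing $k(G)$ with corresponding witnesses $B_1,\ldots,B_k$ as in Definition \ref{def:k}. Unfolding the definition in terms of $G$ yields $|A_i\cap A_j|>t$ for $i\neq j\in[k]$, $|A_i\cap B_i|\leq t$ for each $i\in[k]$, and $|A_i\cap B_j|>t$ for $1\leq i<j\leq k$. I would then use the same symmetrization trick as in the proof of Theorem \ref{thm:int}, forming the two families of length $2k$:
\[
(A_1,\ldots,A_k,B_k,\ldots,B_1)\quad\text{and}\quad(B_1,\ldots,B_k,A_k,\ldots,A_1),
\]
and verifying that they satisfy the hypotheses of Lemma \ref{lem:furedi} with $p=r$. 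F\"uredi's bound would then give $2k\leq \binom{2r-2t}{r-t}$, as needed.

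Most of the verification is straightforward casework on whether each index lies in $[k]$ or in $\{k+1,\ldots,2k\}$: the three relations listed above directly handle every pair except the ``antidiagonal'' pair $(i,2k+1-i)$ with $i\in[k]$, where the condition $|C_i\cap D_{2k+1-i}|>t$ reduces to $|A_i|>t$. This is the only real obstacle, and it is a minor one: for $k\geq 2$ one has $t<|A_i\cap A_j|\leq |A_i|$ for any $j\in[k]\setminus\{i\}$; and for $k\in\{0,1\}$ the inequality $k\leq \frac{1}{2}\binom{2(r-t)}{r-t}$ follows from $\binom{2(r-t)}{r-t}\geq \binom{2}{1}=2$, which holds because $t\leq r-1$. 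Once this is checked, Proposition \ref{prop:clique} completes the proof.
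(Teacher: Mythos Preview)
Your proposal is correct and follows exactly the approach the paper itself indicates: mimic the proof of Theorem~\ref{thm:int}, replacing Lemma~\ref{lem:frankl} by F\"uredi's Lemma~\ref{lem:furedi}. Your explicit handling of the antidiagonal pair (reducing to $|A_i|>t$, then splitting into $k\geq 2$ versus $k\leq 1$) is in fact more careful than the paper's terse treatment, which glosses over this point.
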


Note that by setting $t=0$ we recover Theorems \ref{thm:cov} and \ref{thm:int}.
The proofs are essentially the same as the proofs of Theorems \ref{thm:cov} and \ref{thm:int}, except for the use of Lemma \ref{lem:furedi} instead of Lemma \ref{lem:frankl}.
The extremal examples are also similar: Let
\[
    \mathcal{H}_1=\left\{  A\cup[t] : \, A\in\binom{[r+p-t]\setminus[t]}{r-t}\right\}
\]
and
\[
    \mathcal{H}_2=\left\{  A\cup[t] : \,  A\in\binom{[2r-t]\setminus[t]}{r-t}\right\}.
\]
The complex $\hcov^{t+1}_{\mathcal{H}_1}$ is the boundary of the $\left(\binom{r+p-2t}{r-t}-1\right)$-dimensional simplex, hence it is not $\left(\binom{r+p-2t}{r-t}-2\right)$-collapsible, and the complex $\intcomplex^{t+1}_{\mathcal{H}_2}$ is the boundary of the $\frac{1}{2}\binom{2(r-t)}{r-t}$-dimensional cross-polytope, hence it is not $\left( \frac{1}{2}\binom{2(r-t)}{r-t}-1\right)$-collapsible.

Restricting ourselves to special classes of hypergraphs we may obtain better bounds on the collapsibility of their associated  complexes. For example, we may look at $r$-partite $r$-uniform hypergraphs (that is, hypergraphs $\mathcal{H}$ on vertex set $V=V_1\cupdot V_2\cupdot \cdots \cupdot V_r$ such that $|A\cap V_i|=1$ for all $A\in\mathcal{H}$ and $i\in[r]$). In this case we have the following result:

\begin{theorem}
\label{thm:rpartiteint}
Let $\mathcal{H}$ be an $r$-partite $r$-uniform hypergraph. Then $\intcomplex_{\mathcal{H}}$ is $2^{r-1}$-collapsible.
\end{theorem}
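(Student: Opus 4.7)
The strategy parallels the proof of Theorem \ref{thm:int}. Let $G$ be the graph on vertex set $\mathcal{H}$ whose edges are the pairs of disjoint hypergraph edges, so that $\intcomplex_{\mathcal{H}}=I(G)$. By Proposition \ref{prop:clique} it suffices to show $k(G)\leq 2^{r-1}$. Unfolding Definition \ref{def:k}, the problem reduces to bounding $k$ whenever $A_1,\ldots,A_k\in\mathcal{H}$ are pairwise intersecting and $B_1,\ldots,B_k\in\mathcal{H}$ satisfy $A_i\cap B_i=\emptyset$ for all $i$ and $A_i\cap B_j\neq\emptyset$ for $1\leq i<j\leq k$.

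Exactly as in the proof of Theorem \ref{thm:int}, the next step is to pass to the length-$2k$ doubled lists
\[
	(A_1,\ldots,A_k,B_k,\ldots,B_1), \qquad (B_1,\ldots,B_k,A_k,\ldots,A_1).
\]
A direct case check (using the pairwise intersection of the $A_i$'s to handle the mixed middle block) shows that these satisfy the skew Bollob\'as-type hypotheses at size $2k$: matched positions are disjoint, and cross positions in increasing order intersect. So if one can establish the following $r$-partite analogue of Lemma \ref{lem:frankl}---\emph{for pairs $\{A_i\}_{i=1}^k$ and $\{B_i\}_{i=1}^k$ of edges in an $r$-partite $r$-uniform hypergraph with $A_i\cap B_i=\emptyset$ and $A_i\cap B_j\neq\emptyset$ for $i<j$, one has $k\leq 2^r$}---then applying it to the doubled lists yields $2k\leq 2^r$, hence $k(G)\leq 2^{r-1}$, and Proposition \ref{prop:clique} completes the proof.

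The main obstacle is the $r$-partite Bollob\'as inequality above; this is the only place where the $r$-partite structure is genuinely used. I would prove it by the polynomial method. Identify each part $V_j$ with a subset of a fixed field, and write $A_i=(a_i^{(1)},\ldots,a_i^{(r)})$, $B_i=(b_i^{(1)},\ldots,b_i^{(r)})$ with $a_i^{(j)},b_i^{(j)}\in V_j$. For each $i\in[k]$ consider the multilinear polynomial
\[
	P_i(x_1,\ldots,x_r)=\prod_{j=1}^{r}\bigl(x_j-b_i^{(j)}\bigr).
\]
Then $P_i(A_\ell)=\prod_{j=1}^{r}\bigl(a_\ell^{(j)}-b_i^{(j)}\bigr)$ is nonzero precisely when $A_\ell\cap B_i=\emptyset$. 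The hypotheses therefore force the matrix $\bigl(P_i(A_\ell)\bigr)_{i,\ell\in[k]}$ to be upper triangular with nonzero diagonal, so $P_1,\ldots,P_k$ are linearly independent. Since they all lie in the $2^r$-dimensional space of multilinear polynomials in $r$ variables, $k\leq 2^r$, as required.
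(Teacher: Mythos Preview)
Your proof is correct and follows exactly the paper's approach: the paper states that the proof of Theorem~\ref{thm:rpartiteint} is identical to that of Theorem~\ref{thm:int} with Lemma~\ref{lem:frankl} replaced by Lemma~\ref{lem:lov} (Lov\'asz--Ne\v{s}et\v{r}il--Pultr), which is precisely the $r$-partite Bollob\'as inequality you state and use. The only difference is that you additionally supply a clean polynomial-method proof of Lemma~\ref{lem:lov}, whereas the paper simply cites it.
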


The next example shows that the bound on the collapsibility of $\intcomplex_{\mathcal{H}}$ in Theorem \ref{thm:rpartiteint} is tight: Let $\mathcal{H}$ be the complete $r$-partite $r$-uniform hypergraph with all sides of size $2$. It has $2^r$ edges, and any edge $A\in\mathcal{H}$ intersects all the edges of $\mathcal{H}$ except its complement. Therefore the complex $\intcomplex_{\mathcal{H}}$ is the boundary of the $2^{r-1}$-dimensional cross-polytope, so it is homeomorphic to a $(2^{r-1}-1)$-dimensional sphere. Hence, by Proposition \ref{claim:hom}, $\intcomplex_{\mathcal{H}}$ is not $(2^{r-1}-1)$-collapsible.

For the proof we need the following Lemma, due to Lov{\'a}sz, Ne{\v{s}}et{\v{r}}il and Pultr.

\begin{lemma}[Lov{\'a}sz, Ne{\v{s}}et{\v{r}}il, Pultr {\cite[Prop. 5.3]{lovasz1980product}}]
\label{lem:lov}
	Let $\{A_1,\ldots, A_k\}$ and $\{B_1,\ldots, B_k\}$ be families of subsets of $V=V_1\cupdot V_2 \cupdot \cdots \cupdot V_r$ such that:
	\begin{itemize}
	\item 	$|A_i\cap V_j|= 1$, $|B_i\cap V_j|= 1$ for all $i\in[k]$ and $j\in[r]$,
	\item $A_i\cap B_i=\emptyset$ for all $i\in [k]$, \item $A_i\cap B_j\neq \emptyset$ for all $1\leq i<j\leq k$.
	\end{itemize}
	Then
	\[
	k\leq 2^{r}.
	\]
\end{lemma}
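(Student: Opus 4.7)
The plan is to prove Lemma \ref{lem:lov} via a polynomial / linear-algebra argument in the spirit of the skew Bollobás two-families inequality, using the partite structure $V=V_1\cupdot\cdots\cupdot V_r$ to reduce to a $2^r$-dimensional space of multilinear polynomials. First, I would identify each (finite) $V_j$ with a subset of $\mathbb{R}$ via an arbitrary injection, and for each $i\in[k]$ and $j\in[r]$ let $a_i^j,b_i^j\in\mathbb{R}$ be the real numbers representing the unique elements of $A_i\cap V_j$ and $B_i\cap V_j$. For each $i\in[k]$, define the multilinear polynomial
\[
P_i(y_1,\ldots,y_r)\;=\;\prod_{j=1}^{r}(y_j-a_i^j)\in\mathbb{R}[y_1,\ldots,y_r],
\]
and the evaluation points $\bar b_\ell=(b_\ell^1,\ldots,b_\ell^r)\in\mathbb{R}^r$.

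The key step is to show that the $k\times k$ matrix $M$ with $M_{i\ell}=P_i(\bar b_\ell)$ is lower triangular with nonzero diagonal. The diagonal entry $M_{ii}=\prod_j(b_i^j-a_i^j)$ is nonzero because $A_i\cap B_i=\emptyset$ forces $a_i^j\neq b_i^j$ in every coordinate. Above the diagonal, for $i<\ell$, the hypothesis $A_i\cap B_\ell\neq\emptyset$ supplies some coordinate $j_0$ with $a_i^{j_0}=b_\ell^{j_0}$; the corresponding factor of $P_i(\bar b_\ell)$ vanishes, so $M_{i\ell}=0$. Hence $\det M\neq 0$, which forces $P_1,\ldots,P_k$ to be linearly independent in $\mathbb{R}[y_1,\ldots,y_r]$ (any nontrivial linear dependence among the $P_i$ would produce a nontrivial linear dependence among the rows of $M$).

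Finally, each $P_i$ is a multilinear polynomial in $r$ variables and therefore lies in the $2^r$-dimensional subspace spanned by the monomials $\prod_{j\in S}y_j$ with $S\subseteq[r]$; any linearly independent family in that subspace has size at most $2^r$, giving $k\leq 2^r$ as required.

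The main obstacle, I expect, is setting up the encoding correctly: one must recognise that the partite condition $|A_i\cap V_j|=|B_i\cap V_j|=1$ is exactly what allows each transversal to be represented by a single point of $\mathbb{R}^r$, so that each intersection condition $a_i^{j_0}=b_\ell^{j_0}$ becomes the vanishing of a single real factor rather than a combinatorial coincidence among sets. Once this observation is in hand, the argument reduces to a standard Bollobás-style triangular-matrix computation carried out in the $r$-variable multilinear polynomial ring, and no further combinatorial work (shifting, distinctness of the $B_i$'s, or induction on $r$) is needed.
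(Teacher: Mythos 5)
Your proof is correct and complete. Note that the paper itself does not prove Lemma \ref{lem:lov}; it is imported as a black box from Lov\'asz, Ne\v{s}et\v{r}il and Pultr, so there is no in-paper argument to measure yours against. What you have written is the skew Bollob\'as-type polynomial method: the partite hypothesis lets you encode each $B_\ell$ as a point $\bar b_\ell\in\mathbb{R}^r$ and each $A_i$ as the multilinear polynomial $P_i(y)=\prod_{j=1}^r(y_j-a_i^j)$; the condition $A_i\cap B_i=\emptyset$ forces $a_i^j\neq b_i^j$ in \emph{every} coordinate (a common element would have to lie in some part $V_j$ and be the unique representative of both sets there), so $P_i(\bar b_i)\neq 0$, while $A_i\cap B_\ell\neq\emptyset$ for $i<\ell$ kills one factor of $P_i(\bar b_\ell)$; the resulting triangular matrix shows the $P_i$ are linearly independent in the $2^r$-dimensional space of multilinear polynomials in $r$ variables. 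All of these steps check out, including the only delicate point, namely that the encodings $V_j\hookrightarrow\mathbb{R}$ need only be injective on each part, since equalities $a_i^{j}=b_\ell^{j}$ are only ever compared within a single $V_j$. This is essentially the technique Alon used for the common generalization of Lemmas \ref{lem:frankl} and \ref{lem:lov} cited at the end of Section \ref{sec:4}; the original Lov\'asz--Ne\v{s}et\v{r}il--Pultr argument is a tensor-product dimension count, which is the same linear algebra in a different dress. So your route is a legitimate, self-contained replacement for the citation, and the bound it yields is exactly the tight one.
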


A common generalization of Lemma \ref{lem:frankl} and Lemma \ref{lem:lov} was proved by Alon in \cite{alon1985extremal}. 

The proof of Theorem \ref{thm:rpartiteint} is the same as the proof of Theorem \ref{thm:int}, except that we replace Lemma \ref{lem:frankl} by Lemma \ref{lem:lov}. A similar argument was also used by Aharoni and Berger (\cite[Theorem 5.1]{aharoni2009rainbow}) in order to prove a related result about rainbow matchings in $r$-partite $r$-uniform hypergraphs. 

\subsection*{Acknowledgements}
I thank Professor Roy Meshulam for his guidance and help. I thank the anonymous referee for some helpful suggestions.

\end{document}